\documentclass[11pt]{amsart}
\usepackage{amsmath,amsthm}
\usepackage{amstext,amsfonts,bm,amssymb}
\usepackage{graphics,graphicx} 
\usepackage{a4wide}
\usepackage{float}
\usepackage{color}
\usepackage{verbatim}

\usepackage{gastex}

\theoremstyle{plain}
\newtheorem{theorem}{Theorem}
\newtheorem{Prop}{Proposition}

\newtheorem{Lem}{Lemma}

\def\fl#1{\left\lfloor#1\right\rfloor}

\def\sttf2#1#2{\left[\!\!\left[#1\atop#2\right]\!\!\right]}  
\def\stf3f#1#2{\left[\!\!\left[\!\!\left[#1\atop#2\right]\!\!\right]\!\!\right]} 
\def\stff4#1#2{\left[\!\!\left[\!\!\left[\!\!\left[#1\atop#2\right]\!\!\right]\!\!\right]\!\!\right]}
\def\stss2#1#2{\left\{\!\!\left\{#1\atop#2\right\}\!\!\right\}}

\allowdisplaybreaks 


\begin{document}

\hbadness 5000
\vbadness 10000

\hfuzz 5pt

\title[A $q$-multiple zeta function]{Some explicit values of a $q$-multiple zeta function whose denominator power is not uniform} 

\author{Yuri Bilu}
\address{Institut de Math\'ematiques de Bordeaux UMR 5251\\ Universit\'e de Bordeaux and CNRS \\Talence 33405 France}
\email{yuri@math.u-bordeaux.fr} 

\author{Hideaki Ishikawa}
\address{School of Education \\University of Toyama \\Toyama 930-8555\\ Japan}
\email{ishikawa@edu.u-toyama.ac.jp} 

\author{Takao Komatsu}
\address{Institute of Mathematics\\ Henan Academy of Sciences\\ Zhengzhou 450046\\ China;  \linebreak
Department of Mathematics, Institute of Science Tokyo, 2-12-1 Ookayama, Meguro-ku, Tokyo 152-8551, Japan}
\email{komatsu.t.al@m.titech.ac.jp;\,komatsu@zstu.edu.cn}
\thanks{T.K. is the corresponding author.}

\date{
}

\begin{abstract}
One of the generalizations of multiple zeta values is the $q$-version, and in the case of finite sums, they may be expressed explicitly in polynomial form. Several results have been found when the powers of the factors in the denominator are equal and when they are small. In this paper, we give explicit formulas for the case when the powers are unequal and are small.  
\medskip

\end{abstract}

\subjclass[2010]{Primary 11M32; Secondary 05A15, 05A19, 05A30, 11B37, 11B73}
\keywords{multiple zeta functions, $q$-multiple harmonic sums, elementary symmetric functions, polynomials}

\maketitle

\section{Introduction}\label{sec:1} 

For positive integers $s_1,s_2,\dots,s_m$, the multiple zeta function of the form 
\begin{equation}
\zeta(s_1,s_2,\dots,s_m):=\sum_{1\le i_1<i_2<\dots<i_m}\frac{1}{i_1^{s_1}i_2^{s_2}\dots i_m^{s_m}}
\label{def:mzv}
\end{equation}
and its generalizations or modifications have been studied by many researchers. It is believed that Euler initiated such a study, in the special case  $m=2$, but there had been little progress for more than 200 years until the brilliant works by Hoffman, Zagier, Zhang, and others in the early 1990s. See the book \cite{Zhao_book}.  

For the infinite sum to converge, we need to have $s_m\ge 2$. The simplest case is $s_1=s_2=\dots=s_{m-1}=1$ and $s=s_m$ is arbitrary.  Following \cite{Young1,Young2}, $\zeta(\underbrace{1,\dots,1}_{m-1},s)$ is called the {\it height $1$ multiple zeta function of depth $m$}. This height $1$ multiple zeta function has strong connections with Stirling, poly-Bernoulli, harmonic, hyperharmonic, and Roman harmonic numbers, and the multiple Hurwitz zeta function of order $r$ \cite{Ruijsenaars,Young14}. 

Finite multiple zeta functions, or  {\it multiple harmonic sums}, are defined as 
$$
\zeta_n(s_1,s_2,\dots,s_m):=\sum_{1\le i_1<i_2<\dots<i_m\le n}\frac{1}{i_1^{s_1}i_2^{s_2}\dots i_m^{s_m}}\,,
$$  
so that $\zeta(s_1,s_2,\dots,s_m)=\lim_{n\to\infty}\zeta_n(s_1,s_2,\dots,s_m)$. 

A different type of generalization is in terms of the \textit{ $q$-numbers }
$$
[n]_q:=\frac{1-q^n}{1-q}\quad(q\ne 1)\,. 
$$ 
Then the usual integers are recovered as $n=\lim_{q\to 1}[n]_q$. Various types of the $q$-multiple zeta functions (e.g., \cite{Bradley,OOZ,Zhao,Zudilin}) and the finite $q$-multiple zeta functions (e.g., \cite{BTT18,BTT20,Takeyama09,Tasaka21}) are introduced and studied. 

In this paper, we consider the function 
\begin{equation}
\mathfrak Z_n(q;;s_1,s_2,\dots,s_m):=\sum_{1\le i_1<i_2< \dots<i_m\le n-1}\frac{1}{(1-q^{i_1})^{s_1}(1-q^{i_2})^{s_2}\cdots(1-q^{i_m})^{s_m}}\,. 
\label{def:qssmzv}
\end{equation}
When $n\to\infty$, the  infinite version was studied by Schlesinger \cite{Schlesinger}:  
\begin{equation}
\mathfrak Z(q;;s_1,s_2,\dots,s_m):=\sum_{1\le i_1<i_2< \dots<i_m}\frac{1}{(1-q^{i_1})^{s_1}(1-q^{i_2})^{s_2}\cdots(1-q^{i_m})^{s_m}}
\label{def:qsmzv}
\end{equation}  
Note that in (\ref{def:qssmzv}) the upper bound is not $n$ but $n-1$. Nevertheless, we use the notation $\mathfrak Z_n$ because we take $q=\zeta_n:=\exp(2\pi\sqrt{-1}/n)$ as the special value in this paper.  

In the infinite case like (\ref{def:qsmzv}), the values of the function can be often expressed in terms of other elementary functions, similarly to the values of the function defined in (\ref{def:mzv}).  
In the finite case  like (\ref{def:qssmzv}), the values of the function can be expressed explicitly by choosing suitable $q$ and $s_1,s_2,\dots,s_m$. In our previous works (\cite{CK,Ko25b,Ko25a,KL,KP,KW}), we have considered the case where $s_1=s_2=\dots=s_m$, then specialize $q=\zeta_n$ or similarly. 
As noted in \cite{BTT20}, no explicit expression has been known unless $s_1=s_2=\dots=s_m$. 

In this paper, we deal with the finite $q$-multiple zeta values in the case where  one of $s_1, \ldots, s_m$ is $2$, and all the others  are $1$; for example, $s_1=s_2=\dots=s_{m-1}=1$ and $s_m=2$.

It looks hard to determine the imaginary part of  $\mathfrak Z_n(\zeta_n;;\underbrace{1,\dots,1}_{m-1},2)$. However, using a certain symmetric relation, we can find out the real part explicitly. The formula (see Theorem \ref{th:sym12111} below) is surprisingly neat, but proving it is more troublesome than one might imagine, because 
we can no longer use the properties of Stirling numbers or directly apply elementary symmetric functions, as we did in our previous work.

\section{More variations about the sum of symmetric values} 

In a previous paper \cite{Ko25a}, it is shown that 
$$
\mathfrak Z_n(\zeta_n;;\underbrace{1,\dots,1}_m)=\frac{1}{m+1}\binom{n-1}{m}
$$ 
and 
\begin{equation}
\mathfrak Z_n(\zeta_n;;s)=\left|\begin{array}{ccccc}
\frac{n-1}{2}&1&0&\cdots&\\ 
\frac{2}{3}\binom{n-1}{2}&\frac{n-1}{2}&1&&\vdots\\ 
\vdots&&\ddots&&0\\
\frac{s-1}{s}\binom{n-1}{s-1}&\frac{1}{s-1}\binom{n-1}{s-2}&\cdots&\frac{n-1}{2}&1\\ 
\frac{s}{s+1}\binom{n-1}{s}&\frac{1}{s}\binom{n-1}{s-1}&\cdots&\frac{1}{3}\binom{n-1}{2}&\frac{n-1}{2}\\ 
\end{array}
\right|\,.
\label{eq:zz-det2}
\end{equation}
In particular, by taking $s=2,3,\dots,9$ in (\ref{eq:zz-det2}), we have 
{\small
\begin{align*}
\mathfrak Z_n(\zeta_n;;2)&=-\frac{(n-1)(n-5)}{12}\,,\\
\mathfrak Z_n(\zeta_n;;3)&=-\frac{(n-1)(n-3)}{8}\,,\\
\mathfrak Z_n(\zeta_n;;4)&=\frac{(n-1)(n^3+n^2-109 n+251)}{6!}\,,\\
\mathfrak Z_n(\zeta_n;;5)&=\frac{(n-1)(n-5)(n^2+6 n-19)}{288}\,,\\
\mathfrak Z_n(\zeta_n;;6)&=-\frac{(n-1)(2 n^5+2 n^4-355 n^3-355 n^2+11153 n-19087)}{12\cdot 7!}\,,\\
\mathfrak Z_n(\zeta_n;;7)&=-\frac{(n-1)(n-7)(2 n^4+16 n^3-33 n^2-376 n+751)}{24\cdot 6!}\,,\\
\mathfrak Z_n(\zeta_n;;8)&=\frac{(n-1)(3 n^7+3 n^6-917 n^5-917 n^4+39697 n^3+39697 n^2-744383 n+1070017)}{10!}\,,\\
\mathfrak Z_n(\zeta_n;;9)&=\frac{27(n-1)(n-3)(n-9)(n^5+13 n^4+10 n^3-350 n^2-851 n+2857)}{2\cdot 10!}\,.
\end{align*} 
}%

No explicit expression has yet been found about the case where not all values of $s=j$ ($j=1,2,\dots,m$) are equal, although such cases may be curious. The case where only one value $1$ is replaced by $2$ was hard enough. See \cite{BTT20} for related conjectures.   
For non-negative integers $a$ and $b$, we prove the following.  

\begin{theorem} 
Let $a$ and $b$ be non-negative integers such that ${m:=a+b+1\ge2}$. Then for ${n\ge 2}$ we have
\begin{equation}
\mathfrak Z_n(\zeta_n;;\underbrace{\underbrace{1,\dots,1}_a,2,\underbrace{1,\dots,1}_b}_m)+\mathfrak Z_n(\zeta_n;;\underbrace{\underbrace{1,\dots,1}_b,2,\underbrace{1,\dots,1}_a}_m)=-\frac{m!(n-2 m-3)}{(m+2)!}\binom{n-1}{m}\,.
\label{eq:sym12111}
\end{equation} 
\label{th:sym12111}
\end{theorem}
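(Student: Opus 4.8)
The plan is to convert everything into (position–weighted) elementary symmetric functions of the quantities $x_j:=1/(1-\zeta_n^j)$, $1\le j\le n-1$, and to exploit the reflection $j\mapsto n-j$. The one fact driving the whole argument is $x_j+x_{n-j}=1$, immediate from $1/(1-\zeta_n^{j})+1/(1-\zeta_n^{-j})=1$; hence $\overline{x_j}=x_{n-j}=1-x_j$ and $\mathrm{Re}\,x_j=\tfrac12$. Writing $m=a+b+1$ and
$$A_r:=\sum_{1\le j_1<\cdots<j_m\le n-1}x_{j_1}\cdots x_{j_m}\,x_{j_{r+1}}\qquad(0\le r\le m-1),$$
the summand of the $q$-value whose unique $2$ sits in position $r+1$ is exactly $x_{j_1}\cdots x_{j_m}$ with the factor in that position squared. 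Thus the two terms on the left of \eqref{eq:sym12111} are $A_a$ and $A_b$, and the left-hand side is $A_a+A_b$.

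First I would record the ``symmetric relation'' promised in the introduction. Applying the order-reversing bijection $(j_1,\dots,j_m)\mapsto(n-j_m,\dots,n-j_1)$ to the sum defining $A_a$ and using $x_{n-j}=\overline{x_j}$ moves the distinguished position $a+1$ to position $m-a=b+1$ and conjugates every factor; since reversal is a bijection on increasing tuples this yields $A_a=\overline{A_b}$. Consequently $A_a+A_b=2\,\mathrm{Re}(A_b)$ is real, which is precisely why the left-hand side of \eqref{eq:sym12111} admits a clean closed form while the individual imaginary parts remain intractable.

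To evaluate it I would decompose $A_r$ by the squared index $d$, which must be the $(r+1)$-st smallest element of the tuple:
$$A_r=\sum_{d=1}^{n-1}x_d^2\,e_r(x_1,\dots,x_{d-1})\,e_{m-1-r}(x_{d+1},\dots,x_{n-1}).$$
Because the right-hand side of \eqref{eq:sym12111} depends only on $m$ and $n$, the theorem is equivalent to the statement that the symmetric pair $A_r+A_{m-1-r}$ is independent of the split, i.e. that it equals the average $\tfrac2m\sum_{r=0}^{m-1}A_r$. This independence is the heart of the matter and, I expect, the main obstacle: a short check shows it is \emph{false} for an arbitrary sequence satisfying only $x_j+x_{n-j}=1$, so its proof must genuinely use the arithmetic of $\zeta_n$ and cannot be collapsed to a single elementary symmetric function (the partial symmetric functions $e_r(x_1,\dots,x_{d-1})$ have no closed form). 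I would prove it by induction on the depth $m$, peeling off the extreme summation and feeding in the reversal relation together with the all-ones evaluation recalled above and the single-power formula \eqref{eq:zz-det2} as base data. This is exactly the step that is ``more troublesome than one might imagine,'' since the Stirling-number and elementary-symmetric shortcuts of the equal-power case are unavailable here.

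Granting the independence, only the total remains, and it is elementary. Two applications of the recursion $e_k(x)=x_i\,e_{k-1}(x\setminus x_i)+e_k(x\setminus x_i)$ give the Newton-type identity $\sum_i x_i^2\,e_{m-1}(x\setminus x_i)=p_1e_m-(m+1)e_{m+1}$, where $p_1=\sum_{j=1}^{n-1}x_j$; summing the decomposition of $A_r$ over $r$ and using $\sum_r e_r(\text{below})\,e_{m-1-r}(\text{above})=e_{m-1}(x\setminus x_d)$ yields $\sum_{r=0}^{m-1}A_r=p_1e_m-(m+1)e_{m+1}$. Finally I would substitute the special values $p_1=\tfrac{n-1}{2}$, $e_m=\tfrac1{m+1}\binom{n-1}{m}$ and $e_{m+1}=\tfrac1{m+2}\binom{n-1}{m+1}$ (all instances of the all-ones formula), so that $A_a+A_b=\tfrac2m\bigl(p_1e_m-(m+1)e_{m+1}\bigr)=\tfrac{2m+3-n}{m+2}\,e_m$, which collapses at once to $-\tfrac{m!(n-2m-3)}{(m+2)!}\binom{n-1}{m}$, as claimed.
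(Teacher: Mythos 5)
Your setup reproduces, almost verbatim, the paper's \emph{alternative} proof of Theorem~\ref{th:sym12111}: your reversal argument giving $A_a=\overline{A_b}$ is Lemma~\ref{lem:symmetric}, your Newton-type identity $\sum_{r=0}^{m-1}A_r=p_1e_m-(m+1)e_{m+1}$ is exactly relation \eqref{eq:qee}, and your final substitution of $p_1=\tfrac{n-1}{2}$ and $e_m=\tfrac{1}{m+1}\binom{n-1}{m}$ is the computation the paper carries out after \eqref{eq:qee}. The closing arithmetic is correct. However, there is a genuine gap, and it is precisely the step you flag yourself: passing from the total $\sum_r A_r$ to the individual pair $A_a+A_b$ requires that $\mathfrak{Re}(A_r)$ (equivalently $A_r+A_{m-1-r}$) be independent of $r$, and this is nowhere proved. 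You correctly observe that this independence is the heart of the matter and that it fails for a generic sequence satisfying only $x_j+x_{n-j}=1$, but you then only promise an induction on $m$ ``peeling off the extreme summation'' without exhibiting the inductive step; since the truncated symmetric functions $e_r(x_1,\dots,x_{d-1})$ in your decomposition have no closed form and do not transform simply under $j\mapsto n-j$, it is not clear such an induction closes. As written, the proposal establishes only the averaged statement $\frac{1}{m}\sum_{r}\bigl(A_r+A_{m-1-r}\bigr)=-\frac{m!(n-2m-3)}{(m+2)!}\binom{n-1}{m}$, not the theorem.

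For comparison, the paper's primary proof does not try to distribute the total over the positions. It treats the single real quantity $A_a+A_b=2\,\mathfrak{Re}(A_a)$ as a function of $n$, shows via Lemma~\ref{lem:realsym12111} that it is a polynomial in $n$ of degree $m+1$ with rational coefficients, notes that it vanishes at $n=1,\dots,m$, and then determines the two remaining coefficients by computing the left-hand side of \eqref{eq:sym12111} explicitly at $n=m+1$ (where it equals $\frac{1}{m+1}$, since the sum degenerates to a single term) and at $n=m+2$ (where a longer cyclotomic computation gives $\frac{m+1}{m+2}$). Those two explicit root-of-unity evaluations are the concrete input for which your proposal offers no substitute. (The paper does also assert, inside the proof of Lemma~\ref{lem:realsym12111}, that all the real parts $\mathfrak{Re}(P_m^{(j)})$ coincide, so the position-independence you isolate is indeed the crux; but the interpolation route at least replaces it by a degree bound plus finitely many checkable evaluations of the specific pair in question, whereas your argument stops exactly where the difficulty begins.)
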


Furthermore, for non-negative integers $a$, $b$, $a'$ and $b'$ such that 
${m=a+b+1=a'+b'+1}$,  
we can show that 
{\small
$$
\mathfrak{Re}\left(\mathfrak Z_n(\zeta_n;;\underbrace{\underbrace{1,\dots,1}_a,2,\underbrace{1,\dots,1}_b}_m)+\mathfrak Z_n(\zeta_n;;\underbrace{\underbrace{1,\dots,1}_{b'},2,\underbrace{1,\dots,1}_{a'}}_m)\right)=-\frac{m!(n-2 m-3)}{(m+2)!}\binom{n-1}{m}\,.
$$ 
}%
This means that the real part of $\mathfrak Z_n(\zeta_n;;\underbrace{\underbrace{1,\dots,1}_a,2,\underbrace{1,\dots,1}_b}_m)$ depends only on~$m$, and is the same for any choice of $a$ and $b$ (though the imaginary part may vary):  
$$
\mathfrak{Re}\left(\mathfrak Z_n(\zeta_n;;\underbrace{\underbrace{1,\dots,1}_a,2,\underbrace{1,\dots,1}_b}_m)\right)=-\frac{m!(n-2 m-3)}{2(m+2)!}\binom{n-1}{m}. 
$$ 
This follows from \eqref{eq:sym12111} and the following lemma.

\begin{Lem}  
For any positive integers $s_1, \ldots, s_m$ we have 
$$
\mathfrak{Im}\left(\mathfrak Z_n(\zeta_n;;s_1,s_2,\dots,s_m)+\mathfrak Z_n(\zeta_n;;s_m,\dots,s_2,s_1)\right)=0\,. 
$$ 
\label{lem:symmetric}
\end{Lem}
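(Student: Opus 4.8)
The plan is to prove the sharper, "unsummed" identity that conjugation reverses the exponent string: for $q=\zeta_n$,
$$
\overline{\mathfrak Z_n(\zeta_n;;s_1,s_2,\dots,s_m)}=\mathfrak Z_n(\zeta_n;;s_m,\dots,s_2,s_1).
$$
Granting this, write $A=\mathfrak Z_n(\zeta_n;;s_1,\dots,s_m)$ and $B=\mathfrak Z_n(\zeta_n;;s_m,\dots,s_1)$. The identity gives $\overline A=B$, and applying it again with the exponents already reversed gives $\overline B=A$; hence $\overline{A+B}=\overline A+\overline B=B+A=A+B$, so $A+B$ equals its own conjugate, is therefore real, and its imaginary part vanishes. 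This is exactly the assertion of the lemma.

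To establish the conjugation identity I would argue term by term in the defining sum \eqref{def:qssmzv}. Since $\overline{\zeta_n}=\zeta_n^{-1}$ and $\zeta_n^{-i}=\zeta_n^{\,n-i}$ for every integer $i$, conjugating a single factor yields $\overline{(1-\zeta_n^{i})^{s}}=(1-\zeta_n^{\,n-i})^{s}$. Consequently the conjugate of the term attached to a tuple $1\le i_1<\dots<i_m\le n-1$ is
$$
\frac{1}{(1-\zeta_n^{\,n-i_1})^{s_1}(1-\zeta_n^{\,n-i_2})^{s_2}\cdots(1-\zeta_n^{\,n-i_m})^{s_m}}.
$$

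The heart of the matter is the re-indexing. I would set $j_k:=n-i_{m+1-k}$ for $1\le k\le m$. Because $i\mapsto n-i$ is an order-reversing bijection of $\{1,\dots,n-1\}$ onto itself, the assignment $(i_1,\dots,i_m)\mapsto(j_1,\dots,j_m)$ is a bijection from strictly increasing $m$-tuples to strictly increasing $m$-tuples in $\{1,\dots,n-1\}$. One checks that $n-i_k=j_{m+1-k}$, so after sorting the factors by increasing index $j_1<\dots<j_m$ the conjugated term above becomes
$$
\frac{1}{(1-\zeta_n^{j_1})^{s_m}(1-\zeta_n^{j_2})^{s_{m-1}}\cdots(1-\zeta_n^{j_m})^{s_1}},
$$
which is precisely the summand of $\mathfrak Z_n(\zeta_n;;s_m,\dots,s_2,s_1)$ indexed by $(j_1,\dots,j_m)$. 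Summing over all admissible tuples and invoking the bijection then yields the conjugation identity.

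The only delicate point is keeping the two relabelings in step: conjugation moves the index $i_k$ to $n-i_k$ while the exponent $s_k$ stays attached to it, and the subsequent sorting of the new indices into increasing order is what reverses the exponent string $s_1,\dots,s_m$ into $s_m,\dots,s_1$. Beyond this bookkeeping the argument is routine; in particular no smallness or special structure of the $s_k$ is used, so the lemma holds for arbitrary positive integers $s_1,\dots,s_m$, exactly as stated.
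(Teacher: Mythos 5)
Your proposal is correct and follows essentially the same route as the paper: both establish the conjugation identity $\overline{\mathfrak Z_n(\zeta_n;;s_1,\dots,s_m)}=\mathfrak Z_n(\zeta_n;;s_m,\dots,s_1)$ via the order-reversing substitution $i\mapsto n-i$ on $\{1,\dots,n-1\}$ and then deduce that the symmetric sum equals its own conjugate. Your write-up merely makes the re-indexing bookkeeping more explicit than the paper's one-line display.
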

\begin{proof}
We have 
\begin{align*}
\overline{\mathfrak Z_n(\zeta_n;;s_1,s_2,\dots,s_m)}
&=\sum_{1\le i_1< i_2< \dots< i_m\le n-1}\frac{1}{(1-\overline{\zeta_n}^{i_1})^{s_1}(1-\overline{\zeta_n}^{i_2})^{s_2}\cdots(1-\overline{\zeta_n}^{i_m})^{s_m}}\\
&=\sum_{n-1\ge n-i_1> n-i_2> \dots> n-i_m\ge 1}\frac{1}{(1-\zeta_n^{n-i_1})^{s_1}(1-\zeta_n^{n-i_2})^{s_2}\cdots(1-\zeta_n^{n-i_m})^{s_m}}\\
&=\mathfrak Z_n(\zeta_n;;s_m,s_{m-1},\dots,s_1)\,. 
\end{align*}
\end{proof}

The proof of Theorem \ref{th:sym12111} relies on the following lemma, which might be of independent interest.  For convenience, put 
\begin{equation}
u_r:=\frac{1}{1-\zeta_n^r}\quad(1\le r\le n-1)\,.
\label{eq:ur}
\end{equation}

\begin{Lem}
The real part of 
$$
\mathfrak Z_n(\zeta_n;;\underbrace{1,\dots,1}_{j-1},2,\underbrace{1,\dots,1}_{m-j})=\sum_{1\le i_1<\dots<i_m\le n-1}u_{i_1}\cdots u_{i_{j-1}}u_{i_j}^2 u_{i_{j+1}}\cdots u_{i_m}
$$ 
is a polynomial in $n$ of degree $m+1$ with rational coefficients. 
\label{lem:realsym12111}
\end{Lem}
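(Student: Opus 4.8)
The plan is to reduce the real part to a reflection-symmetric combination, then peel off the square and reduce everything to quantities already known to be polynomial in $n$. Write $S_j$ for the sum in the statement, $S_j=\sum_{1\le i_1<\dots<i_m\le n-1}u_{i_1}\cdots u_{i_{j-1}}u_{i_j}^2 u_{i_{j+1}}\cdots u_{i_m}$. First I would record the basic reflection identity $\overline{u_r}=u_{n-r}=1-u_r$, which follows from $\overline{\zeta_n}=\zeta_n^{-1}$ together with $u_r+u_{n-r}=1$. Repeating verbatim the computation of Lemma \ref{lem:symmetric} (conjugate termwise, replace $\overline{\zeta_n}^{\,i}$ by $\zeta_n^{n-i}$, and reindex $i_k\mapsto n-i_{m+1-k}$, which moves the squared entry from position $j$ to position $m+1-j$) gives $\overline{S_j}=S_{m+1-j}$. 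Hence $\mathfrak{Re}(S_j)=\tfrac12(S_j+S_{m+1-j})$, and it suffices to prove that this real quantity is a polynomial in $n$ of degree $m+1$ with rational coefficients.

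Next I would localize and eliminate the squared factor. Summing first over the value $i_j=r$ of the squared index, and writing $e^{<r}_a:=e_a(u_1,\dots,u_{r-1})$ and $e^{>r}_a:=e_a(u_{r+1},\dots,u_{n-1})$ for the elementary symmetric functions below and above $r$, we have $S_j=\sum_{r=1}^{n-1}u_r^2\,e^{<r}_{j-1}e^{>r}_{m-j}$. The key algebraic step is $u_r^2=u_r-u_r(1-u_r)=u_r-u_r u_{n-r}=u_r-|u_r|^2$, where $|u_r|^2=\tfrac{1}{4\sin^2(\pi r/n)}$ is real and invariant under $r\mapsto n-r$. This splits off the fully symmetric part: $S_j=e_m-T_j$, where $e_m=\tfrac{1}{m+1}\binom{n-1}{m}$ is a polynomial in $n$ of degree $m$ (by the result of \cite{Ko25a} recalled at the start of this section) and $T_j:=\sum_{r=1}^{n-1}|u_r|^2\,e^{<r}_{j-1}e^{>r}_{m-j}$. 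Since $e_m$ is real, the problem reduces to showing that $\mathfrak{Re}(T_j)=\tfrac12(T_j+T_{m+1-j})$ is a polynomial in $n$ of degree $m+1$.

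For the remaining term my aim is to express everything through the power sums $p_k:=\sum_{r=1}^{n-1}u_r^k=\mathfrak Z_n(\zeta_n;;k)$, each of which is a polynomial in $n$ by \eqref{eq:zz-det2}. The guiding computation is the version summed over $j$: since $\sum_{j=1}^m e^{<r}_{j-1}e^{>r}_{m-j}=e_{m-1}(u_1,\dots,\widehat{u_r},\dots,u_{n-1})=\sum_{l=0}^{m-1}(-1)^l u_r^l e_{m-1-l}$ and $|u_r|^2 u_r^l=u_r^{l+1}(1-u_r)=u_r^{l+1}-u_r^{l+2}$, one obtains $\sum_{j=1}^m T_j=\sum_{l=0}^{m-1}(-1)^l e_{m-1-l}(p_{l+1}-p_{l+2})$, a polynomial in $n$. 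The task is to carry out the analogous reduction for the single symmetric pair $T_j+T_{m+1-j}$ rather than the full sum over $j$. For this I would introduce the split generating function $\Psi(x,y):=\sum_{r=1}^{n-1}|u_r|^2\prod_{s<r}(1+x u_s)\prod_{s>r}(1+y u_s)$, so that $T_j+T_{m+1-j}$ is the coefficient of $x^{j-1}y^{m-j}$ plus that of $x^{m-j}y^{j-1}$ in $\Psi(x,y)$, and then use the reflection $u_{n-r}=1-u_r$ (which pairs the factors below $r$ with those above $n-r$) to show that the non-symmetric contributions of the partial products cancel, leaving a $\mathbb{Q}$-combination of products $e_a p_b$ and $p_a p_b$.

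The hard part will be exactly this last step. The partial elementary symmetric functions $e^{<r}_a=e_a(u_1,\dots,u_{r-1})$ are neither symmetric in all $n-1$ variables nor individually polynomial in $n$ (already $e^{<r}_1=\sum_{s<r}u_s$ is a genuine partial sum), so the naive induction on $m$ fails: peeling off the largest index leaves a sum restricted to an interval $\{1,\dots,r-1\}$ rather than to the full range. The crux is therefore to prove that, once the reflection-symmetric combination $T_j+T_{m+1-j}$ is formed and summed over $r$, all interval-restricted quantities telescope and the expression collapses onto the full-range symmetric functions $e_a$ and power sums $p_b$. The degree bound $m+1$ then follows from tracking the top-degree contribution, which is of the shape $e_{m-1}p_2$ (degree $(m-1)+2$), all lower products $e_a p_b$ and $p_a p_b$ having degree at most $m+1$.
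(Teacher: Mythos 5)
Your reduction is sound as far as it goes, and it runs parallel to the paper's argument: the reflection identity $\overline{u_r}=u_{n-r}=1-u_r$ giving $\overline{S_j}=S_{m+1-j}$, the localization $S_j=\sum_r u_r^2 e^{<r}_{j-1}e^{>r}_{m-j}$, the splitting $u_r^2=u_r-u_r u_{n-r}$, and the computation showing that the \emph{sum over all positions} $\sum_{j=1}^m T_j$ collapses onto full-range quantities $e_a$ and $p_b$ are all correct. (The paper packages this last point as the identity $mQ_m(n)=e_1e_m-(m+1)e_{m+1}$ for the average $Q_m(n)=\frac1m\sum_j P_m^{(j)}(n)$, which is your computation in different clothing.)

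The genuine gap is exactly the step you flag as ``the hard part,'' and it is not a technicality to be deferred: passing from the average over all $j$ to a single reflection-symmetric pair $T_j+T_{m+1-j}$ \emph{is} the content of the lemma. Your proposed mechanism --- that in $\Psi(x,y)$ the ``non-symmetric contributions of the partial products cancel,'' so that the interval-restricted quantities $e^{<r}_a$, $e^{>r}_a$ ``telescope'' onto full-range symmetric functions --- is asserted, not proved, and no identity is offered that would force such a cancellation for a fixed pair of exponents $(j-1,m-j)$ rather than after summing over all $j$. As written, the proof establishes polynomiality only for $\sum_j\mathfrak{Re}(T_j)$, not for any individual $\mathfrak{Re}(T_j)$. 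The paper closes this gap by asserting that the real parts $\mathfrak{Re}(P_m^{(j)}(n))$ coincide for all $j$, so that each equals the average $Q_m(n)$; whatever one thinks of that justification, your write-up needs \emph{some} argument of this kind (or a genuinely worked-out cancellation in $\Psi(x,y)$) before the lemma is proved. A secondary point: even once polynomiality is established, the lemma claims degree exactly $m+1$, so you must also verify that the leading coefficient (of the shape $\tfrac12 c_m-(m+1)c_{m+1}$ in the paper's notation, or your $e_{m-1}p_2$ term net of the other top-degree products) does not vanish; tracking ``the top-degree contribution'' only gives the upper bound.
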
 

\begin{proof} 
We denote
$$
P_m^{(j)}(n):=\mathfrak Z_n(\zeta_n;;\underbrace{1,\dots,1}_{j-1},2,\underbrace{1,\dots,1}_{m-j})=\sum_{1\le i_1<\dots<i_m\le n-1}u_{i_1}\cdots u_{i_{j-1}}u_{i_j}^2 u_{i_{j+1}}\cdots u_{i_m}. 
$$ 
Notice that 
$$
u_r=\frac{1}{2}+\frac{\sqrt{-1}}{2}\cot\frac{r\pi}{n}
$$
and 
$$
u_r^2=\frac{1}{4}\left(1-\cot^2\frac{r\pi}{n}-2\sqrt{-1}\cot\frac{r\pi}{n}\right)\,.
$$
Consider the average of $P_m^{(j)}(n)$: 
$$
Q_m(n)=\frac{1}{m}\sum_{j=1}^m P_m^{(j)}(n)\,. 
$$ 

By using the elementary symmetric function, we write 
$$
e_k:=e_k(u_1,\dots,u_{n-1})=\sum_{1\le i_1<\dots<i_k\le n-1}u_{i_1}\cdots u_{i_k}\,. 
$$ 
Since 
\begin{align*}
e_1 e_m&=\sum_{j=1}^{n-1}u_j\sum_{1\le i_1<\dots<i_m\le n-1}u_{i_1}\cdots u_{i_m}\\
&=(m+1)e_{m+1}+\sum_{|S|=m}\left(\sum_{j\in S}u_j\right)\prod_{h\in S}u_h\\
&=(m+1)e_{m+1}+\sum_{j=1}^m P_m^{(j)}(n)\\
&=(m+1)e_{m+1}+m Q_m(n)\,,  
\end{align*}
we have 
\begin{equation}
m Q_m(n)=e_1 e_m-(m+1)e_{m+1}\,.
\label{eq:qee}
\end{equation} 
Here, for a given ($m+1$)-element subset $H$, the product $\prod_{h\in H}u_h$ appears in $e_1 e_m$ exactly $m+1$ times (choose any element of $H$ as $u_j$, the rest as the $m$-tuple). 
By Lemma \ref{lem:symmetric}, $P_m^{(j)}(n)=\overline{P_m^{(m+1-j)}(n)}$. Hence, $\mathfrak{Re}(P_m^{(j)}(n))=\mathfrak{Re}\bigl(P_m^{(m+1-j)}(n)\bigr)$. 
By symmetry of the symmetric group $S_m$ acting on positions, all $\mathfrak{Re}\bigl(P_m^{(j)}(n)\bigr)$ are equal. Therefore, 
$$
Q_m(n)=\frac{1}{m}\sum_{r=1}^m P_m^{(r)}(n)=\frac{1}{m}\sum_{r=1}^m\mathfrak{Re}(P_m^{(r)}(n))=\mathfrak{Re}\bigl(P_m^{(1)}(n)\bigr):=R_m(n)\,. 
$$
Also, $Q_m(n)$ is real since it equals its own complex conjugate. 

Now, we can show the following.  
For $1\le r\le n-1$, 
\begin{align}
p_t(n)&:=\sum_{r=1}^{n-1}\frac{1}{(1-\zeta_n^r)^t}=\sum_{r=1}^{n-1}\left(\frac{1}{2}+\frac{\sqrt{-1}}{2}\cot\left(\frac{r \pi}{n}\right)\right)^t\notag\\
&=\frac{1}{2^t}\sum_{h=0}^t\binom{t}{h}(-1)^{\frac{h}{2}}\sum_{r=1}^{n-1}\cot^h\left(\frac{r \pi}{n}\right)\,.
\label{eq:ptn}
\end{align}  
Since all odd powers $\cot^h$ ($h$ is odd) are cancelled with $r$ and $n-r$, we can write 
\begin{equation}
p_t(n)=\frac{1}{2^t}\sum_{u=0}^{\fl{\frac{t}{2}}}\binom{t}{2 u}(-1)^u S_{2 u}(n)\,, 
\label{eq:pp11}
\end{equation}
where 
$$
S_{2 u}(n)=\sum_{r=1}^{n-1}\cot^{2 u}\left(\frac{r \pi}{n}\right)\,.
$$ 
By \cite[Corollary 2.2 (2.16)]{BY02}, for a positive integer $u$, we have 
$$
S_{2 u}(n)=(-1)^u n-(-1)^u 2^{2 u}\sum_{\genfrac{}{}{0pt}{}{j_0+j_1+\cdots+j_{2 u}=u}{ j_0,j_1,\dots,j_{2 u}\ge 0}}n^{2 j_0}\prod_{r=0}^{2 u}\frac{B_{2 j_r}}{(2 j_r)!}\,,
$$
where $B_n$ are Bernoulli numbers, defined by 
$$
\frac{x}{e^x-1}=\sum_{n=0}^\infty B_n\frac{x^n}{n!}\,. 
$$  
Substituting it into (\ref{eq:pp11}), we have 
$$
p_t(n)=\frac{1}{2^t}\sum_{u=0}^{\fl{\frac{t}{2}}}\binom{t}{2 u}\left(n-2^{2 u}\sum_{\genfrac{}{}{0pt}{}{j_0+j_1+\cdots+j_{2 u}=u}{ j_0,j_1,\dots,j_{2 u}\ge 0}}n^{2 j_0}\prod_{r=0}^{2 u}\frac{B_{2 j_r}}{(2 j_r)!}\right)\,.
$$ 
Since $S_{2 u}(n)$ is a polynomial with rational coefficients of degree $2 u$ in $n$, from (\ref{eq:ptn}), $p_t(n)$ is a polynomial with rational coefficients of degree at most $t$ in $n$. 

Consider the right-hand side of the identity (\ref{eq:qee}).   
$$
e_1=\sum_{k=1}^{n-1}u_k=\frac{n-1}{2}
$$ 
is a polynomial in $n$ with rational coefficients. 
Putting $p_k:=\mathfrak{Re}(e_k)$ and $q_k:=\mathfrak{Im}(e_k)$, we have 
\begin{align*}  
&e_1 e_m-(m+1)e_{m+1}\\
&=\left(\frac{n-1}{2}p_m-(m+1)p_{m+1}\bigr)+\sqrt{-1}\bigl(\frac{n-1}{2}q_m-(m+1)q_{m+1}\right)\\
&=\frac{n-1}{2}p_m-(m+1)p_{m+1}\,. 
\end{align*} 
Here, $m R_m(n)$ is real, so the imaginary part must vanish: 
$$
\frac{n-1}{2}q_m-(m+1)q_{m+1}=0\,.
$$
Since $\deg p_m = m$ and $\deg p_{m+1} = m+1$, both 
$\frac{n-1}{2} p_m$ and $(m+1) p_{m+1}$ have degree $m+1$.  
Therefore, the degree of $R_m(n)$, that is, the real part of $Q_m(n)$, is at most $m+1$. 

In fact, if the leading term of $p_m(n)$ is of the form $c_m n^m$, where $c_m$ is the rational number depending only on $m$, then the leading term of $m R_m(n)$ is given by 
$$
\frac{1}{2}c_m n^{m+1}-(m+1)c_{m+1}n^{m+1}\,. 
$$ 
From small cases for $m$ (see also Proposition \ref{prp:symok234} below), we see that cancellation does not occur. 
By induction, the leading coefficient never vanishes, so $\deg R_m(n) = m+1$.
\end{proof}

\noindent 
{\it Proof of Theorem \ref{th:sym12111}.}  
From the way of taking the sum, when $n=1,2,\dots,m$, 
the left-hand side of (\ref{eq:sym12111}) is equal to $0$. 
From Lemma \ref{lem:realsym12111}, this value is a polynomial of degree ($m+1$) in $n$. 
Hence, the right-hand side of (\ref{eq:sym12111}) is of the form  
\begin{equation}
C(n-1)(n-2)\cdots(n-m)(n-a m-b)\,,
\label{eq:const1}
\end{equation}  
where $C$, $a$ and $b$ are the constants.   
Substitute $n=m+1$ into (\ref{eq:sym12111}). Since 
\begin{align*}
&\sum_{1\le i_1<i_2<\dots<i_m\le m}\frac{1}{(1-\zeta_{m+1}^{i_1})\cdots(1-\zeta_{m+1}^{i_{r-1}})(1-\zeta_{m+1}^{i_r})^2(1-\zeta_{m+1}^{i_{r+1}})\cdots(1-\zeta_{m+1}^{i_m})}\\
&=\frac{1}{(1-\zeta_{m+1}^r)\prod_{j=1}^{m}(1-\zeta_{m+1}^{j})}\\
&=\frac{1}{(m+1)(1-\zeta_{m+1}^r)}\,, 
\end{align*}
the left-hand side of (\ref{eq:sym12111}) is given by 
$$
\frac{1}{(m+1)(1-\zeta_{m+1}^r)}+\frac{1}{(m+1)(1-\zeta_{m+1}^{m+1-r})}=\frac{1}{m+1}\,.
$$ 
On the other hand, when $n=m+1$, from (\ref{eq:const1}), the right-hand side of (\ref{eq:sym12111}) is equal to 
$-C m!\bigl((a-1)m+(b-1)\bigr)$. 
Therefore, by 
$$
\frac{1}{m+1}=-C m!\bigl((a-1)m+(b-1)\bigr)\,,
$$ 
we obtain that 
\begin{equation}  
C=-\frac{1}{(m+1)!\bigl((a-1)m+(b-1)\bigr)}\,.
\label{eq:344a}
\end{equation}   

When $n=m+2$, since 
\begin{align*}
&\sum_{1\le i_1<i_2<\dots<i_m\le m+1}\frac{1}{(1-\zeta_{m+2}^{i_1})\cdots(1-\zeta_{m+2}^{i_{r-1}})(1-\zeta_{m+2}^{i_r})^2(1-\zeta_{m+2}^{i_{r+1}})\cdots(1-\zeta_{m+2}^{i_m})}\\
&=\frac{(1-\zeta_{m+2}^{r+1})+\cdots+(1-\zeta_{m+2}^{m+1})}{(1-\zeta_{m+2}^r)\prod_{j=1}^{m+1}(1-\zeta_{m+2}^{j})}
 +\frac{(1-\zeta_{m+2})+\cdots+(1-\zeta_{m+2}^{r})}{(1-\zeta_{m+2}^{r+1})\prod_{j=1}^{m+1}(1-\zeta_{m+2}^{j})}\\
&=\frac{m-r+2+\zeta_{m+2}+\cdots+\zeta_{m+2}^r}{(m+2)(1-\zeta_{m+2}^r)}
 +\frac{r+1+\zeta_{m+2}^{r+1}+\cdots+\zeta_{m+2}^{m+1}}{(m+2)(1-\zeta_{m+2}^{r+1})}\\
&=\frac{1}{(m+2)(1-\zeta_{m+2}^r)(1-\zeta_{m+2}^{r+1})}\biggl(m-r+2+\zeta_{m+2}+\cdots+\zeta_{m+2}^r+r+1\\
&\quad+\zeta_{m+2}^{r+1}+\cdots+\zeta_{m+2}^{m+1} -(m-r+2)\zeta_{m+2}^{r+1}-(\zeta_{m+2}^{r+2}+\cdots+\zeta_{m+2}^{2 r+1})-(r+1)\zeta_{m+2}^r\\
&\quad-(\zeta_{m+2}^{2 r+1}+\cdots+\zeta_{m+2}^{m+r+1})\biggr)\\
&=\frac{1}{(m+2)(1-\zeta_{m+2}^r)(1-\zeta_{m+2}^{r+1})}\biggl((m+2)-(m-r+2)\zeta_{m+2}^{r+1}-(r+1)\zeta_{m+2}^r\\
&\quad -\frac{\zeta_{m+2}^{r+2}-\zeta_{m+2}^{2 r+2}+\zeta_{m+2}^{2 r+1}-\zeta_{m+2}^{m+r+2}}{1-\zeta_{m+2}}\biggr)\\
&=\frac{1}{(m+2)(1-\zeta_{m+2}^r)(1-\zeta_{m+2}^{r+1})}\biggl((m+2)-(m-r+2)\zeta_{m+2}^{r+1}-(r+1)\zeta_{m+2}^r\\
&\quad -\zeta_{m+2}^{r+2}(1+\zeta_{m+2}+\cdots+\zeta_{m+2}^{m-1})-\zeta_{m+2}^{2 r+1}
\biggr)\\
&=\frac{1}{(m+2)(1-\zeta_{m+2}^r)(1-\zeta_{m+2}^{r+1})}\biggl((m+2)-(m-r+2)\zeta_{m+2}^{r+1}-(r+1)\zeta_{m+2}^r\\
&\quad -\zeta_{m+2}^{r+2}(-\zeta_{m+2}^{m}-\zeta_{m+2}^{m+1})-\zeta_{m+2}^{2 r+1}
\biggr)\\
&=\frac{(m+2)-(m-r+1)\zeta_{m+2}^{r+1}-r\zeta_{m+2}^r-\zeta_{m+2}^{2 r+1}}{(m+2)(1-\zeta_{m+2}^r)(1-\zeta_{m+2}^{r+1}))}\,, 
\end{align*}  
the left-hand side of (\ref{eq:sym12111}) is given by the summation of this and its conjugate: 
\begin{align*}  
&\frac{(m+2)-(m-r+1)\zeta_{m+2}^{r+1}-r\zeta_{m+2}^r-\zeta_{m+2}^{2 r+1}}{(m+2)(1-\zeta_{m+2}^r)(1-\zeta_{m+2}^{r+1})}\\
&\quad +\frac{(m+2)-(m-r+1)\zeta_{m+2}^{m-r+1}-r\zeta_{m+2}^{m-r+2}-\zeta_{m+2}^{m-2 r+1}}{(m+2)(1-\zeta_{m+2}^{m-r+2})(1-\zeta_{m+2}^{m-r+1})}\\
&=\frac{m+1}{m+2}\,.
\end{align*}
Here 
we used the identities  
\begin{align*}
&(1-\zeta_{m+2}^r)(1-\zeta_{m+2}^{r+1})(1-\zeta_{m+2}^{m-r+2})(1-\zeta_{m+2}^{m-r+1})\\
&=4+\zeta_{m+2}+\zeta_{m+2}^{m+1}+\zeta_{m+2}^{m-2 r+1}-2\zeta_{m+2}^{m-r+1}-2\zeta_{m+2}^{m-r+2}-2\zeta_{m+2}^r-2\zeta_{m+2}^{r+1}+\zeta_{m+2}^{2 r+1}
\end{align*}
and 
\begin{align*}
&\bigl(m+2-(m-r+1)\zeta_{m+2}^{r+1}-r\zeta_{m+2}^r-\zeta_{m+2}^{2 r+1}\bigr)(1-\zeta_{m+2}^{m-r+2})(1-\zeta_{m+2}^{m-r+1})\\ 
&\quad +\bigl(m+2-(m-r+1)\zeta_{m+2}^{m-r+1}-r\zeta_{m+2}^{m-r+2}-\zeta_{m+2}^{m-2 r+1}\bigr)(1-\zeta_{m+2}^r)(1-\zeta_{m+2}^{r+1})\\
&=(m+1)(4+\zeta_{m+2}+\zeta_{m+2}^{m+1}+\zeta_{m+2}^{m-2 r+1}-2\zeta_{m+2}^{m-r+1}-2\zeta_{m+2}^{m-r+2}-2\zeta_{m+2}^r-2\zeta_{m+2}^{r+1}+\zeta_{m+2}^{2 r+1})\,.
\end{align*}
On the other hand, when $n=m+2$, it follows from (\ref{eq:const1}) that the right-hand side of (\ref{eq:sym12111}) is equal to 
$-C(m+1)!\bigl((a-1)m+(b-2)\bigr)$. 
Therefore, by 
$$
\frac{m+1}{m+2}=-C(m+1)!\bigl((a-1)m+(b-2)\bigr)\,,
$$ 
we obtain that 
\begin{equation}  
C=-\frac{m+1}{(m+2)!\bigl((a-1)m+(b-2)\bigr)}\,.
\label{eq:344b}
\end{equation}  
Comparing (\ref{eq:344a}) and (\ref{eq:344b}), $\bigl((a-1)m+(b-2)\bigr)$ must be divided by $m+1$, so $a-1=b-2$. Since $(a-1)m+a=m+2$, we get $a=2$ and $b=1$. Hence, $C=-1/(m+2)!$.  
Therefore, the right-hand side of (\ref{eq:sym12111}) is given by  
$$ 
-\frac{(n-1)(n-2)\cdots(n-m)(n-2 m-3)}{(m+2)!}\,.
$$ 
\qed 
\bigskip 

We also sketch a different proof Theorem \ref{th:sym12111}, based on results and arguments from \cite{Ko24,Ko25a}. 
Using \cite[Theorem 9]{Ko24} and arguing as in \cite{Ko25a}, we obtain
$$
e_m=\mathfrak Z_n(\zeta_n;;m,1)=\sum_{1\le i_1<\dots<i_m<n}u_{i_1}\dots u_{i_m}=\frac{1}{m+1}\binom{n-1}{m}\,.
$$
Hence, the right-hand side of (\ref{eq:qee}) is equal to 
\begin{align*}
e_1 e_m-(m+1)e_{m+1}&=\frac{n-1}{2}\frac{1}{m+1}\binom{n-1}{m}-\frac{m+1}{m+2}\binom{n-1}{m+1}\\
&=-\frac{-m(n-2 m-3)}{2(m+2)(m+1)}\binom{n-1}{m}=-m\frac{m!(n-2 m-3)}{2(m+2)!}\binom{n-1}{m}\,.
\end{align*}
This implies the desired result.

\subsection{The cases $m=2,3,4$}  

We will now prove  identity (\ref{eq:sym12111}) for small positive integers $m$, using a different approach. 

\begin{Prop}
Identity (\ref{eq:sym12111}) is valid for $m=2,3,4$.   
\label{prp:symok234}
\end{Prop}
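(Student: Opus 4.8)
The plan is to prove, for each admissible pair $(a,b)$ separately, the equivalent real-part form of \eqref{eq:sym12111} and then verify it by polynomial interpolation. First I would invoke Lemma~\ref{lem:symmetric}: the second summand in \eqref{eq:sym12111} is the reversal of the first, hence its complex conjugate, so with $j:=a+1$ the left-hand side equals $P_m^{(j)}(n)+\overline{P_m^{(j)}(n)}=2\,\mathfrak{Re}\bigl(P_m^{(j)}(n)\bigr)$. Thus \eqref{eq:sym12111} for a given $(a,b)$ is equivalent to
$$
\mathfrak{Re}\bigl(P_m^{(j)}(n)\bigr)=-\frac{m!\,(n-2m-3)}{2\,(m+2)!}\binom{n-1}{m}\,,\qquad j=a+1 .
$$
By Lemma~\ref{lem:realsym12111} the left-hand side is a polynomial in $n$ of degree $m+1$, and the right-hand side is also a polynomial of degree $m+1$; two such polynomials coincide as soon as they agree at $m+2$ distinct points. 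I would therefore evaluate both sides at the nodes $n=1,2,\dots,m+2$.

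Most of these nodes are free. For $1\le n\le m$ the index range $1\le i_1<\dots<i_m\le n-1$ is empty, so $P_m^{(j)}(n)=0$, while the right-hand side vanishes because of the factor $\binom{n-1}{m}$. At $n=m+1$ the only admissible tuple is $(1,2,\dots,m)$, and using $\prod_{i=1}^{m}(1-\zeta_{m+1}^{\,i})=m+1$ one gets $P_m^{(j)}(m+1)=\frac{1}{(m+1)(1-\zeta_{m+1}^{\,j})}$, whose real part is $\tfrac1{2(m+1)}$ for every $j$; and the right-hand side equals $\tfrac1{2(m+1)}$ at $n=m+1$ as well. So $m+1$ of the $m+2$ nodes already match, for every placement of the~$2$.

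The only node carrying genuine content is $n=m+2$, where the tuples range over all $m$-subsets of $\{1,\dots,m+1\}$. Here, for $m=2,3,4$, I would directly evaluate $P_m^{(j)}(m+2)$ by hand, telescoping the geometric sums of powers of $\zeta_{m+2}$ appearing in the numerators; its real part comes out to $\tfrac{m+1}{2(m+2)}$ for every $j$, which is again what the right-hand side gives. This agrees with, and can be shortcut by, the conjugate-pair identity $P_m^{(j)}(m+2)+P_m^{(m+1-j)}(m+2)=\tfrac{m+1}{m+2}$ (each summand being half of the value of the left-hand side of \eqref{eq:sym12111} at $n=m+2$). Once the $m+2$ node values agree, the two degree-$(m+1)$ polynomials are identical, which establishes \eqref{eq:sym12111} for all $(a,b)$ with $m=a+b+1\in\{2,3,4\}$.

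The main obstacle is precisely this $n=m+2$ evaluation: it is the one place where the independence of $\mathfrak{Re}\bigl(P_m^{(j)}(n)\bigr)$ on the position $j$ of the squared factor is actually witnessed, and for $m=4$ the bookkeeping of the geometric sums (together with checking that the imaginary contributions cancel in the real part) is the most laborious step. A convenient independent cross-check on the outcome is the trace $Q_m(n)=\tfrac1m\sum_{j=1}^m P_m^{(j)}(n)$: by \eqref{eq:qee} together with $e_1=\tfrac{n-1}{2}$ and $e_k=\tfrac1{k+1}\binom{n-1}{k}$ it must equal $-\tfrac{m!(n-2m-3)}{2(m+2)!}\binom{n-1}{m}$, and the same value is reproduced from the tabulated single-variable evaluations $\mathfrak Z_n(\zeta_n;;s)=p_s(n)$ through the harmonic (stuffle) product $\mathfrak Z_n(\zeta_n;;s)\,\mathfrak Z_n(\zeta_n;;t)=\mathfrak Z_n(\zeta_n;;s,t)+\mathfrak Z_n(\zeta_n;;t,s)+\mathfrak Z_n(\zeta_n;;s+t)$.
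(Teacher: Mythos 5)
Your proposal is sound, but it is not the route the paper takes for this Proposition: you are essentially re-running the paper's proof of the general Theorem~\ref{th:sym12111} (Lemma~\ref{lem:realsym12111} plus Lagrange interpolation at the nodes $n=1,\dots,m+2$) specialized to $m=2,3,4$, and nothing in your argument actually exploits the restriction to small $m$. The paper instead proves Proposition~\ref{prp:symok234} by a method independent of Lemma~\ref{lem:realsym12111}: it symmetrizes over all orderings of $(s_1,\dots,s_m)$ and applies the quasi-shuffle relations --- identity~\eqref{eq:syms1s2} for $m=2$ and its analogues over the $6$, respectively $24$, permutations for $m=3,4$ --- to express the fully symmetrized sum as an explicit polynomial in the tabulated depth-one values $\mathfrak Z_n(\zeta_n;;s)$ with $s\le 5$, and then divides by the number of orderings, using the position-independence of the real parts of the summands. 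The point of that detour is that it serves as a genuinely independent cross-check: the proof of Lemma~\ref{lem:realsym12111} itself cites Proposition~\ref{prp:symok234} (``from small cases \dots\ cancellation does not occur'') to conclude that the degree is \emph{exactly} $m+1$, so a proof of the Proposition that invokes that lemma flirts with circularity. Your argument escapes this only because the interpolation step needs merely the upper bound $\deg\le m+1$, which the paper establishes without reference to the Proposition; you should state explicitly that this is all you use. In exchange, your route is uniform in $j$ and in $m$ and concentrates all the work in the single node $n=m+2$ (where you would have to carry out for $m=4$ essentially the same telescoping of geometric sums that the paper performs for general $m$), whereas the paper's stuffle computation avoids any root-of-unity arithmetic at the cost of handling each $m$ by a separate, increasingly bulky polynomial identity.
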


Assume that  $m=2$. 
Since 
\begin{equation}  
\mathfrak Z_n(\zeta_n;;s_1,s_2)+\mathfrak Z_n(\zeta_n;;s_2,s_1)=\mathfrak Z_n(\zeta_n;;s_1)\mathfrak Z_n(\zeta_n;;s_2)-\mathfrak Z_n(\zeta_n;;s_1+s_2)\,,  
\label{eq:syms1s2}
\end{equation}
using the known expressions for $\mathfrak Z_n(\zeta_n;;1)$, $\mathfrak Z_n(\zeta_n;;2)$ and $\mathfrak Z_n(\zeta_n;;3)$ that can be found in \cite{Ko25a}, we obtain
\begin{align*}
\mathfrak Z_n(\zeta_n;;1,2)+\mathfrak Z_n(\zeta_n;;2,1)&=\mathfrak Z_n(\zeta_n;1,1)\mathfrak Z_n(\zeta_n;1,2)-\mathfrak Z_n(\zeta_n;1,3)\\
&=\frac{n-1}{2}\left(-\frac{(n-1)(n-5)}{12}\right)+\frac{(n-1)(n-3)}{8}\\
&=-\frac{(n-1)(n-2)(n-7)}{24}\,,
\end{align*}
which is (\ref{eq:sym12111}) with $m=2$.  (See also \cite{Ko25b} for the multi zeta-star values.)

When $m=3$, we have 
\begin{align*}  
&\mathfrak Z_n(\zeta_n;;s_1,s_2,s_3)+\mathfrak Z_n(\zeta_n;;s_1,s_3,s_2)+\mathfrak Z_n(\zeta_n;;s_2,s_1,s_3)\\
&\quad +\mathfrak Z_n(\zeta_n;;s_2,s_3,s_1)+\mathfrak Z_n(\zeta_n;;s_3,s_1,s_2)+\mathfrak Z_n(\zeta_n;;s_3,s_2,s_1)\\
&=\mathfrak Z_n(\zeta_n;;s_1)\mathfrak Z_n(\zeta_n;;s_2)\mathfrak Z_n(\zeta_n;;s_3)\\
&\quad -\mathfrak Z_n(\zeta_n;;s_1+s_2)\mathfrak Z_n(\zeta_n;;s_3)-\mathfrak Z_n(\zeta_n;;s_1+s_3)\mathfrak Z_n(\zeta_n;;s_2)\\
&\quad -\mathfrak Z_n(\zeta_n;;s_2+s_3)\mathfrak Z_n(\zeta_n;;s_1)+2\mathfrak Z_n(\zeta_n;;s_1+s_2+s_3)\,. 
\end{align*}
Setting $s_1=s_2=1$ and $s_3=2$,  we obtain
\begin{align}  
\label{ethreesummands}
&2\mathfrak Z_n(\zeta_n;;1,1,2)+2\mathfrak Z_n(\zeta_n;;2,1,1)+2\mathfrak Z_n(\zeta_n;;1,2,1)\\
&=\mathfrak Z_n(\zeta_n;;1)^2\mathfrak Z_n(\zeta_n;;2)-\mathfrak Z_n(\zeta_n;;2)^2 
 -2\mathfrak Z_n(\zeta_n;;1)\mathfrak Z_n(\zeta_n;;3)+2\mathfrak Z_n(\zeta_n;;4)  \nonumber \\
&=-\frac{(n-1)^2}{2^2}\frac{(n-1)(n-5)}{12}-\frac{(n-1)^2(n-5)^2}{12^2} \nonumber\\
&\quad +2\frac{n-1}{2}\frac{(n-1)(n-3)}{8}+2\frac{(n-1)(n^3+n^2-109 n+251)}{6!} \nonumber \\
&=-\frac{(n-1)(n-2)(n-3)(n-9)}{40}\,. \nonumber
\end{align} 
Note that, by symmetry, the real parts of three summands in \eqref{ethreesummands}
are equal. 
Dividing both sides by $3$, we obtain identity (\ref{eq:sym12111}) with $m=3$.

When $m=4$, by considering the values of the function at all the $24$ permutations of $s_1,s_2,s_3,s_4$, we obtain 
\begin{align*}  
&\underbrace{\mathfrak Z_n(\zeta_n;;s_1,s_2,s_3,s_4)+\cdots+\mathfrak Z_n(\zeta_n;;s_4,s_3,s_2,s_1)}_{24}\\
&=\mathfrak Z_n(\zeta_n;;s_1)\mathfrak Z_n(\zeta_n;;s_2)\mathfrak Z_n(\zeta_n;;s_3)\mathfrak Z_n(\zeta_n;;s_4)\\
&\quad -\mathfrak Z_n(\zeta_n;;s_1+s_2)\mathfrak Z_n(\zeta_n;;s_3)\mathfrak Z_n(\zeta_n;;s_4)-\mathfrak Z_n(\zeta_n;;s_1+s_3)\mathfrak Z_n(\zeta_n;;s_2)\mathfrak Z_n(\zeta_n;;s_4)\\
&\quad -\mathfrak Z_n(\zeta_n;;s_1+s_4)\mathfrak Z_n(\zeta_n;;s_2)\mathfrak Z_n(\zeta_n;;s_3)-\mathfrak Z_n(\zeta_n;;s_2+s_3)\mathfrak Z_n(\zeta_n;;s_1)\mathfrak Z_n(\zeta_n;;s_4)\\
&\quad -\mathfrak Z_n(\zeta_n;;s_2+s_4)\mathfrak Z_n(\zeta_n;;s_1)\mathfrak Z_n(\zeta_n;;s_3)-\mathfrak Z_n(\zeta_n;;s_3+s_4)\mathfrak Z_n(\zeta_n;;s_1)\mathfrak Z_n(\zeta_n;;s_2)\\
&\quad +\mathfrak Z_n(\zeta_n;;s_1+s_2)\mathfrak Z_n(\zeta_n;;s_3+s_4)
+\mathfrak Z_n(\zeta_n;;s_1+s_3)\mathfrak Z_n(\zeta_n;;s_2+s_4)\\
&\quad +\mathfrak Z_n(\zeta_n;;s_1+s_4)\mathfrak Z_n(\zeta_n;;s_2+s_3)\\
&\quad +2\mathfrak Z_n(\zeta_n;;s_1+s_2+s_3)\mathfrak Z_n(\zeta_n;;s_4)+2\mathfrak Z_n(\zeta_n;;s_1+s_2+s_4)\mathfrak Z_n(\zeta_n;;s_3)\\
&\quad +2\mathfrak Z_n(\zeta_n;;s_1+s_3+s_4)\mathfrak Z_n(\zeta_n;;s_2)+2\mathfrak Z_n(\zeta_n;;s_2+s_3+s_4)\mathfrak Z_n(\zeta_n;;s_1)\\
&\quad -6\mathfrak Z_n(\zeta_n;;s_1+s_2+s_3+s_4)\,. 
\end{align*} 
Taking $s_1=s_2=s_3=1$ and $s_4=2$ yields that 
\begin{align} 
\label{ethefoursummands}
&6\bigl(\mathfrak Z_n(\zeta_n;;1,1,1,2)+\mathfrak Z_n(\zeta_n;;1,1,2,1) +\mathfrak Z_n(\zeta_n;;1,2,1,1)+\mathfrak Z_n(\zeta_n;;2,1,1,1)\bigr)\\
&=\mathfrak Z_n(\zeta_n;;1)^3\mathfrak Z_n(\zeta_n;;2)-3\mathfrak Z_n(\zeta_n;;1)^2\mathfrak Z_n(\zeta_n;;3)+5\mathfrak Z_n(\zeta_n;;2)\mathfrak Z_n(\zeta_n;;3)\nonumber\\
&\quad -3\mathfrak Z_n(\zeta_n;;1)\bigl(\mathfrak Z_n(\zeta_n;;2)^2-\mathfrak Z_n(\zeta_n;;4)\bigr)-6\mathfrak Z_n(\zeta_n;;5)\nonumber\\
&=\left(\frac{n-1}{2}\right)^3\left(-\frac{(n-1)(n-5)}{12}\right)-3\left(\frac{n-1}{2}\right)^2\left(-\frac{(n-1)(n-3)}{8}\right)\nonumber\\
&\quad +5\left(-\frac{(n-1)(n-5)}{12}\right)\left(-\frac{(n-1)(n-3)}{8}\right)\nonumber\\
&\quad -3\left(\frac{n-1}{2}\right)\left(\left(-\frac{(n-1)(n-5)}{12}\right)^2-\left(\frac{(n-1)(n^3+n^2-109 n+251)}{6!}\right)\right)\nonumber
\\
&\quad -6\left(\frac{(n-1)(n-5)(n^2+6 n-19)}{288}\right) \nonumber \\
&=-\frac{(n-1)(n-2)(n-3)(n-4)(n-11)}{60}\,. \nonumber 
\end{align} 
As before, the real parts of the summands in~\eqref{ethefoursummands} are equal. 
Dividing both sides by $12$, we obtain (\ref{eq:sym12111}) with $m=4$.  

In principle, one can use a similar argument to proof (\ref{eq:sym12111}) for any $m$. However, when $m$ grows, the calculations become too complicated to handle.  

\subsection{The cases  $m=n-k$ with $k=1,2$}  

We will now use a different method to prove  (\ref{eq:sym12111}) in the case $m=n-k$ with small $k$. We will consider only  $k=1,2$. The same method works for bigger values of~$k$, but the   calculations are more complicated. 

\begin{Prop}
Identity (\ref{eq:sym12111}) is valid for $m=n-1$.   
\label{prp:symoknmm1}
\end{Prop}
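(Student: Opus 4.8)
The plan is to exploit the fact that the summation degenerates completely when $m = n-1$. The sum in (\ref{def:qssmzv}) runs over strictly increasing indices $1 \le i_1 < \dots < i_m \le n-1$, and when $m = n-1$ there are exactly as many available indices as slots to fill, so the \emph{only} admissible choice is $i_k = k$ for $1 \le k \le n-1$. Hence each of the two $q$-multiple zeta values on the left-hand side of (\ref{eq:sym12111}) collapses to a single product. Writing $u_r$ as in (\ref{eq:ur}), the first summand is $u_1 u_2 \cdots u_{n-1}$ with the factor at position $a+1$ squared, that is $u_{a+1}\prod_{r=1}^{n-1} u_r$; similarly the second summand equals $u_{b+1}\prod_{r=1}^{n-1} u_r$.

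Next I would evaluate the common product $\prod_{r=1}^{n-1} u_r$. From the factorization $\prod_{r=1}^{n-1}(x - \zeta_n^r) = 1 + x + \dots + x^{n-1}$, setting $x = 1$ gives $\prod_{r=1}^{n-1}(1 - \zeta_n^r) = n$, so $\prod_{r=1}^{n-1} u_r = 1/n$. Thus the left-hand side of (\ref{eq:sym12111}) reduces to $\tfrac{1}{n}(u_{a+1} + u_{b+1})$, and the whole problem is now to identify this quantity.

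The key simplification comes from the constraint $m = a+b+1 = n-1$, which forces $b+1 = n - (a+1)$. Since $\zeta_n^{\,n-(a+1)} = \overline{\zeta_n^{\,a+1}}$, this yields $u_{b+1} = \overline{u_{a+1}}$, so that $u_{a+1} + u_{b+1} = 2\,\mathfrak{Re}(u_{a+1}) = 1$ by $\mathfrak{Re}(u_r) = 1/2$. Consequently the left-hand side equals $1/n$; this is precisely the place where the imaginary contributions cancel, in the spirit of Lemma \ref{lem:symmetric}.

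Finally I would substitute $m = n-1$ into the right-hand side of (\ref{eq:sym12111}): here $\binom{n-1}{m} = 1$, $n - 2m - 3 = -(n+1)$, and $\tfrac{m!}{(m+2)!} = \tfrac{1}{(m+1)(m+2)} = \tfrac{1}{n(n+1)}$, so the right-hand side is $-\tfrac{1}{n(n+1)}\cdot\bigl(-(n+1)\bigr) = 1/n$, matching the left-hand side. The argument is essentially a direct computation with no serious obstacle; the only points requiring care are the bookkeeping that pins the leftover factor to index $a+1$ (respectively $b+1$) and the recognition of the conjugate pairing $u_{b+1} = \overline{u_{a+1}}$, which is what makes the two single-term summands combine into a real value.
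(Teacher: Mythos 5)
Your proof is correct and follows essentially the same route as the paper: both collapse the sum to the single admissible choice $i_k=k$, use $\prod_{r=1}^{n-1}u_r=1/n$, and conclude from $\mathfrak{Re}(u_r)=1/2$. The only cosmetic difference is that you treat arbitrary $(a,b)$ with $a+b+1=n-1$ and pair $u_{a+1}$ with $u_{b+1}=\overline{u_{a+1}}$ explicitly, whereas the paper computes only the case with the exponent $2$ in the last slot and appeals to the symmetry of Lemma \ref{lem:symmetric}.
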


\begin{proof}
By symmetry, it is sufficient to calculate $\mathfrak Z_n(\zeta_n;;\underbrace{1,\dots,1}_{n-2},2)$. From (\ref{eq:ur}), we have 
\begin{align*}
\mathfrak Z_n(\zeta_n;;\underbrace{1,\dots,1}_{n-2},2)&=\sum_{1\le i_1<\dots<i_{n-1}<n}u_{i_1}\cdots u_{i_{n-2}}u_{i_{n-1}}^2\\
&=\sum_{t=n-1}^{n-1}\left(\sum_{1\le i_1<\dots<i_{n-2}<t}\prod_{r=1}^{n-1-1}u_{i_r}\right)u_t^2\\
&=u_1\cdots u_{n-2}u_{n-1}^2\\
&=\frac{1}{n}u_{n-1}\,. 
\end{align*}
Here, we used the identity 
$$
\frac{1}{(1-\zeta_n)(1-\zeta_n^2)\cdots(1-\zeta_n^{n-1})}=u_1 u_2\cdots u_{n-1}=\frac{1}{n}\,.
$$ 
Taking the real part yields 
$$
\mathfrak{Re}\left(\mathfrak Z_n(\zeta_n;;\underbrace{1,\dots,1}_{n-2},2)\right)=\frac{1}{n}\mathfrak{Re}(u_{n-1})=\frac{1}{2 n}\,.
$$
This matches the result in (\ref{eq:sym12111}) when $m=n-1$. 
\end{proof}

\begin{Prop}
Identity (\ref{eq:sym12111}) is valid for $m=n-2$.   
\label{prp:symoknmm2}
\end{Prop}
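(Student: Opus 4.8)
The plan is to follow the direct-computation strategy of Proposition \ref{prp:symoknmm1}, exploiting that for $m=n-2$ the outer sum ranges over the $(n-2)$-element subsets of $\{1,\dots,n-1\}$, each of which is obtained by deleting a single index $k$. First I would reduce the left-hand side of (\ref{eq:sym12111}) to one real-part computation: by Lemma \ref{lem:symmetric} the two summands on the left are complex conjugates, and by (the proof of) Lemma \ref{lem:realsym12111} all the quantities $\mathfrak{Re}(P_m^{(j)}(n))$ coincide, so the left-hand side equals $2\,\mathfrak{Re}(P_{n-2}^{(j)}(n))$ for any convenient position $j$ of the exponent $2$. To parallel Proposition \ref{prp:symoknmm1} I would place the $2$ on the last coordinate and compute $\mathfrak{Re}\bigl(\mathfrak Z_n(\zeta_n;;\underbrace{1,\dots,1}_{n-3},2)\bigr)$.

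Next I would evaluate that term. Writing a generic $(n-2)$-subset as $\{1,\dots,n-1\}\setminus\{k\}$ and using the product identity $u_1\cdots u_{n-1}=1/n$ from Proposition \ref{prp:symoknmm1}, the product of the $u_r$ over the subset is $1/(n u_k)=(1-\zeta_n^k)/n$. The squared factor sits on the largest surviving index, which is $n-1$ for every $k\le n-2$ and drops to $n-2$ only in the single exceptional case $k=n-1$. Summing the geometric part via $\sum_{k=1}^{n-1}\zeta_n^k=-1$, and applying the two algebraic simplifications $u_{n-1}\bigl(n-(1-\zeta_n^{n-1})\bigr)=n u_{n-1}-1$ and $u_{n-2}/u_{n-1}=1/(1+\zeta_n^{-1})$, collapses the whole expression to $u_{n-1}-\tfrac1n+\tfrac{1}{n(1+\zeta_n^{-1})}$.

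Finally I would take real parts. Since $\mathfrak{Re}(u_r)=\tfrac12$ for all $r$, and likewise $\mathfrak{Re}\bigl(1/(1+\zeta_n^{-1})\bigr)=\tfrac12$, this gives $\mathfrak{Re}(P_{n-2}^{(n-2)}(n))=\tfrac12-\tfrac1n+\tfrac{1}{2n}=\tfrac{n-1}{2n}$, so the left-hand side is $\tfrac{n-1}{n}$. On the right-hand side, substituting $m=n-2$ makes $n-2m-3=-(n-1)$, while $\binom{n-1}{m}=n-1$ and $(m+2)!=n!$, which also yields $\tfrac{n-1}{n}$; the two sides agree.

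I expect the only genuine obstacle to be the bookkeeping around the exceptional subset $k=n-1$, where the position of the squared factor shifts, together with correctly collapsing the geometric sum and checking that each reciprocal $1/(1\pm\zeta_n^{\pm1})$ has real part exactly $\tfrac12$. If one prefers a bookkeeping-free route, the identity (\ref{eq:qee}) can be invoked directly: for $m=n-2$ one has $e_{m+1}=u_1\cdots u_{n-1}=1/n$ and $e_m=\sum_k 1/(n u_k)=\tfrac1n\sum_{k}(1-\zeta_n^k)=1$, whence $mQ_m(n)=e_1e_m-(m+1)e_{m+1}=\tfrac{n-1}{2}-\tfrac{n-1}{n}$ and the left-hand side $2Q_m(n)=\tfrac{n-1}{n}$ follows at once.
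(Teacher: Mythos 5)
Your proof is correct and, in its main line, essentially the same as the paper's: you reduce to $\mathfrak{Re}\bigl(\mathfrak Z_n(\zeta_n;;\underbrace{1,\dots,1}_{n-3},2)\bigr)$, use $u_1\cdots u_{n-1}=1/n$ and $\sum_k u_k^{-1}=n$, and arrive at the identical intermediate expression $u_{n-1}-\tfrac1n+\tfrac{1}{n(1+\zeta_n^{-1})}$; the only cosmetic difference is that you organize the sum by the deleted index $k$, whereas the paper splits on the value of the largest index $i_{n-2}\in\{n-2,n-1\}$ and then inserts $u_X(u_X)^{-1}$. Your closing alternative via \eqref{eq:qee}, computing $e_{n-2}=1$ and $e_{n-1}=1/n$ directly, is a genuinely shorter route; it coincides in spirit with the paper's sketched second proof of Theorem \ref{th:sym12111} (which uses $e_m=\frac{1}{m+1}\binom{n-1}{m}$ in general) rather than with the paper's proof of this Proposition, and it buys you freedom from the bookkeeping around the exceptional subset at the cost of leaning on the equal-real-parts claim from Lemma \ref{lem:realsym12111} to identify the left-hand side with $2Q_{n-2}(n)$.
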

\begin{proof}
When $m=n-2$, we have 
\begin{align}
&\mathfrak Z_n(\zeta_n;;\underbrace{1,\dots,1}_{n-3},2)=\sum_{1\le i_1<\dots<i_{n-2}<n}u_{i_1}\cdots u_{i_{n-3}}u_{i_{n-2}}^2\notag\\
&=\sum_{t=n-2}^{n-1}\left(\sum_{1\le i_1<\dots<i_{n-3}<t}\prod_{r=1}^{n-2-1}u_{i_r}\right)u_t^2\notag\\
&=\left(\sum_{1\le i_1<\dots<i_{n-3}<n-2}\prod_{r=1}^{n-2-1}u_{i_r}\right)u_{n-2}^2+\left(\sum_{1\le i_1<\dots<i_{n-3}<n-1}\prod_{r=1}^{n-2-1}u_{i_r}\right)u_{n-1}^2\,.
\label{eq:mn2a}
\end{align} 
The first term of (\ref{eq:mn2a}) is 
\begin{align*}
\left(\sum_{1\le i_1<\dots<i_{n-3}<n-2}\prod_{r=1}^{n-2-1}u_{i_r}\right)u_{n-2}^2&=u_1\cdots u_{n-3}u_{n-2}^2\\
&=\frac{1}{n}\frac{u_{n-2}}{u_{n-1}}=\frac{1}{n}\frac{u_{-2}}{u_{-1}}\\
&=\frac{1}{n}\frac{1}{1+\zeta_n^{-1}}\,. 
\end{align*} 
The second term of (\ref{eq:mn2a}) is 
\begin{align}
&\left(\sum_{1\le i_1<\dots<i_{n-3}<n-1}\prod_{r=1}^{n-2-1}u_{i_r}\right)u_{n-1}^2=\left(\sum_{1\le i_1<\dots<i_{n-3}<n-1}u_{i_1}\cdots u_{i_{n-3}}u_{n-1}\right)u_{n-1}\notag\\
&=\left(\sideset{}{'}{\sum}_{1\le i_1<\dots<i_{n-3}<n-1}u_{i_1}\cdots u_{i_{n-3}}u_X(u_X)^{-1}u_{n-1}\right)u_{n-1}
\label{eq:356}\\
&=\frac{1}{n}\left(\sideset{}{'}{\sum}_{1\le i_1<\dots<i_{n-3}<n-1}{}(u_X)^{-1}\right)u_{n-1}\notag\\
&=\frac{1}{n}\left(\sum_{1\le j_1<n-1}(u_{j_1})^{-1}\right)u_{n-1}
\label{eq:357}\\
&=\frac{1}{n}\left(\sum_{1\le j_1<n}(u_{j_1})^{-1}-(u_{n-1})^{-1}\right)u_{n-1}\notag\\
&=\frac{1}{n}\left(\binom{n}{1}-(u_{n-1})^{-1}\right)u_{n-1}=u_{n-1}-\frac{1}{n}\,.
\notag
\end{align} 
Here, for each $(i_1,\dots,i_{n-3})$ chosen under the condition $1\le i_1<\dots<i_{n-3}<n-1$, $X$ is the positive integer in the range $1\le X<n-1$ which is not a component of $(i_1,\dots,i_{n-3})$. 
For exammple, when $n=7$, for $(i_1,\dots,i_{n-3})=(1,2,4,5)$ we take $X=3$, and for for $(i_1,\dots,i_{n-3})=(2,3,4,5)$ we take $X=1$. 
Such $X$ is determined uniquely for each $(i_1,\dots,i_{n-3})$.  
In (\ref{eq:356}), the sum 
$$
\left(\sideset{}{'}{\sum}_{1\le i_1<\dots<i_{n-3}<n-1}u_{i_1}\cdots u_{i_{n-3}}u_X(u_X)^{-1}u_{n-1}\right)
$$
means that we substitute $u_X(u_X)^{-1}=1$ into the sum  
$$
\left(\sum_{1\le i_1<\dots<i_{n-3}<n-1}u_{i_1}\cdots u_{i_{n-3}}u_{n-1}\right)\,. 
$$

In (\ref{eq:357}), as $X$ takes $1,2,\dots,n-2$ exactly once, we rewrite the form by using $j_1$. 
Taking the real part yields 
\begin{align*}
\mathfrak{Re}\left(\mathfrak Z_n(\zeta_n;;\underbrace{1,\dots,1}_{n-3},2)\right)&=\mathfrak{Re}\left(\frac{1}{n}\frac{1}{1+\zeta_n^{-1}}+u_{n-1}-\frac{1}{n}\right)\\
&=\frac{1}{2 n}+\frac{1}{2}-\frac{1}{n}=\frac{n-1}{2 n}\,.
\end{align*} 
This matches the result in (\ref{eq:sym12111}) when $m=n-2$.  
\end{proof}

\section{More variations about the sum of symmetric values} 

If $2$ is replaced by $3,4,\dots$, an identity similar to (\ref{eq:sym12111}) does not seem to hold.  
Nevertheless, by (\ref{eq:syms1s2}) and known values $\mathfrak Z_n(\zeta_n;1,s)$ in \cite{Ko25a}, we can get other sums of symmetric values  
\begin{align*}
&\mathfrak Z_n(\zeta_n;;2,3)+\mathfrak Z_n(\zeta_n;;3,2)=\frac{(n-1)(n-2)(n-5)(n-7)}{144}\,,\\
&\mathfrak Z_n(\zeta_n;;2,4)+\mathfrak Z_n(\zeta_n;;4,2)=-\frac{(n-1)(n-2)(5 n^4-27 n^3-469 n^2+5787 n-13936)}{12\cdot 7!}\,,\\
&\mathfrak Z_n(\zeta_n;;2,5)+\mathfrak Z_n(\zeta_n;;5,2)=-\frac{(n-1)(n-2)(n-3)(n^3-4 n^2-61 n+424)}{8\cdot 6!}\,,\\
&\mathfrak Z_n(\zeta_n;;2,6)+\mathfrak Z_n(\zeta_n;;6,2)\\
&=-\frac{(n-1)(n-2)(7 n^6-39 n^5-946 n^4+7950 n^3+33743 n^2-411111 n+773596)}{10!}\,,\\
&\mathfrak Z_n(\zeta_n;;2,7)+\mathfrak Z_n(\zeta_n;;7,2)\\
&=\frac{(n-1)(n-2)(43 n^6-291 n^5-2514 n^4+23910 n^3+73587 n^2-870339 n+1501364)}{2\cdot 10!}\,. 
\end{align*} 
For example,  
\begin{align*}
&\mathfrak Z_n(\zeta_n;;2,7)+\mathfrak Z_n(\zeta_n;;7,2)=\mathfrak Z_n(\zeta_n;1,2)\mathfrak Z_n(\zeta_n;1,7)-\mathfrak Z_n(\zeta_n;1,9)\\
&=\frac{(n-1)(n-5)}{12}\frac{(n-1)(n-7)(2 n^4+16 n^3-33 n^2-376 n+751)}{24\cdot 6!}\\
&\quad -\frac{27(n-1)(n-3)(n-9)(n^5+13 n^4+10 n^3-350 n^2-851 n+2857)}{2\cdot 10!}\\
&=\frac{(n-1)(n-2)(43 n^6-291 n^5-2514 n^4+23910 n^3+73587 n^2-870339 n+1501364)}{2\cdot 10!}\,. 
\end{align*}

\section{$q$-multiple zeta-star values of symmetric expressions}  

Similarly to (\ref{def:qssmzv}), consider 
\begin{equation*}
\mathfrak Z_n^\star(q;;s_1,s_2,\dots,s_m):=\sum_{1\le i_1\le i_2\le \dots\le i_m\le n-1}\frac{1}{(1-q^{i_1})^{s_1}(1-q^{i_2})^{s_2}\cdots(1-q^{i_m})^{s_m}}\,. 
\end{equation*}
When $s_1=s_2=\dots=s_m$, explicit formulas for $\mathfrak Z_n^\star(q;;s)$, for $\mathfrak Z_n^\star(q;;\underbrace{1,1,\dots,1}_m)$ and for $\mathfrak Z_n^\star(q;;\underbrace{2,2,\dots,2}_m)$ are given in \cite{Ko25b}.  

First, we have  
\begin{align*}
\mathfrak Z_n^\star(\zeta_n;;1,2)+\mathfrak Z_n^\star(\zeta_n;;2,1)&=\mathfrak Z_n(\zeta_n;1,1)\mathfrak Z_n(\zeta_n;1,2)+\mathfrak Z_n(\zeta_n;1,3)\\
&=\frac{n-1}{2}\left(-\frac{(n-1)(n-5)}{12}\right)-\frac{(n-1)(n-3)}{8}\\
&=-\frac{(n+1)(n-1)(n-4)}{24}\,. 
\end{align*}

However, the situation becomes more complicated, and there does not seem to be a simplified form like (\ref{eq:sym12111}). 
For example, 
\begin{align*}
&\mathfrak Z_n^\star(\zeta_n;;1,1,2)+\mathfrak Z_n^\star(\zeta_n;;2,1,1)\\
&=-\frac{(n+1)(n-1)(n^2+15 n-64)}{720}\,,\\
&\mathfrak Z_n^\star(\zeta_n;;1,2,1)
=-\frac{(n+1)(n-1)(n+3)(n-3)}{240}\,,\\
&\mathfrak Z_n^\star(\zeta_n;;1,1,1,2)+\mathfrak Z_n^\star(\zeta_n;;2,1,1,1)\\
&=\frac{(n+1)(n-1)(n+4)(n-3)(n-7)}{1440}\,,\\
&\mathfrak Z_n^\star(\zeta_n;;1,1,2,1)+\mathfrak Z_n^\star(\zeta_n;;1,2,1,1)\\
&=-\frac{(n+1)(n-1)(n^2-7)}{144}\,,\\
&\mathfrak Z_n^\star(\zeta_n;;1,1,1,1,2)+\mathfrak Z_n^\star(\zeta_n;;2,1,1,1,1)\\
&=\frac{(n+1)(n-1)(2 n^4+63 n^3-334 n^2-567 n+2564)}{60480}\,,\\
&\mathfrak Z_n^\star(\zeta_n;;1,1,1,2,1)+\mathfrak Z_n^\star(\zeta_n;;1,2,1,1,1)\\
&=\frac{(n+1)(n-1)(3 n^4-137 n^2+710)}{20160}\,,\\
&\mathfrak Z_n^\star(\zeta_n;;1,1,2,1,1)
=\frac{(n+1)(n-1)(n^4-188 n^2+1051)}{60480}\,. 
\end{align*}

\section{Finite $q$-multiple zeta functions on $1-2-\cdots-m$ indices} 

The result in Theorem \ref{th:sym12111} with its methods also provide a hint for obtaining an explicit formula for multiple harmonic sums of a different type.  

Consider the function 
\begin{equation}
\mathcal T_n(q;m):=\sum_{1\le i_1<i_2<\cdots<i_m\le n}\frac{1}{(1-q^{(m+1)i_1-1})(1-q^{(m+1)i_2-2})\cdots(1-q^{(m+1)i_m-m})}\,.
\label{fqmzv-1-2-m} 
\end{equation}
A similar but original (not a $q$-) version is treated in \cite{XuZhao_RAMA24}.  
Then we have the following result.  
     
\begin{theorem}
For $n,m\ge 1$
$$
\mathcal T_n(\zeta_{(m+1)n};m)=\frac{1}{m+1}\binom{n}{m}\,.
$$
\label{th:1-2-m}
\end{theorem}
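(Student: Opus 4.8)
The plan is to prove Theorem~\ref{th:1-2-m} by the same two-step scheme that underlies Theorem~\ref{th:sym12111}: first show that, for each fixed $m$, the quantity $\mathcal T_n(\zeta_{(m+1)n};m)$ is a polynomial in $n$ of degree at most $m$ with rational coefficients, and then pin that polynomial down from a few special values. Throughout I would set $N:=(m+1)n$ and use $u_r:=1/(1-\zeta_N^{r})$ as in \eqref{eq:ur}, so that
$$
\mathcal T_n(\zeta_{N};m)=\sum_{1\le i_1<\dots<i_m\le n}\prod_{j=1}^m u_{(m+1)i_j-j}.
$$
The key preliminary observation is that $\zeta_N^{m+1}=\zeta_n$, so the $j$-th factor depends on $i_j$ only through $\zeta_n^{i_j}$ twisted by the fixed root of unity $\zeta_N^{-j}$; in particular the $n$ admissible values in column $j$ are exactly the $n$-th roots of $\zeta_{m+1}^{-j}$, since $\zeta_N^{(m+1)i-j}=\zeta_n^{\,i}\zeta_N^{-j}$ runs over the roots of $x^n-\zeta_{m+1}^{-j}$.

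For the determination step I would first note that the defining sum in \eqref{fqmzv-1-2-m} is empty when $n<m$, so the polynomial vanishes at $n=1,2,\dots,m-1$. I would then evaluate at $n=m$: the only admissible tuple is $(i_1,\dots,i_m)=(1,\dots,m)$, giving exponents $(m+1)j-j=mj$, and since $\zeta_{(m+1)m}^{\,mj}=\zeta_{m+1}^{\,j}$ we obtain
$$
\mathcal T_m(\zeta_{(m+1)m};m)=\prod_{j=1}^m\frac{1}{1-\zeta_{m+1}^{\,j}}=\frac{1}{m+1},
$$
using $\prod_{j=1}^m(1-\zeta_{m+1}^{\,j})=m+1$. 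A polynomial of degree $\le m$ with the $m-1$ roots $n=1,\dots,m-1$ is determined up to two coefficients, so together with $\mathcal T_m=\tfrac1{m+1}$ and one further datum (either a direct evaluation at $n=m+1$, in the spirit of the $n=m+2$ computation in the proof of Theorem~\ref{th:sym12111}, or the leading coefficient), it must coincide with $\frac{1}{(m+1)!}\,n(n-1)\cdots(n-m+1)=\frac{1}{m+1}\binom{n}{m}$.

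The heart of the matter, and the step I expect to be the main obstacle, is the polynomiality together with the degree bound $\deg\le m$ --- the analogue of Lemma~\ref{lem:realsym12111}. The natural attack mirrors that lemma: write each factor as $u_{(m+1)i_j-j}=\tfrac12+\tfrac{\sqrt{-1}}{2}\cot\left(\tfrac{\pi i_j}{n}-\tfrac{\pi j}{N}\right)$, expand the product over $j$, and reduce everything to cotangent power sums controlled by the Berndt--Yeap formula \cite{BY02}, just as in \eqref{eq:pp11}. Two features that are absent in Lemma~\ref{lem:realsym12111} make this harder: the $m$ factors carry $m$ \emph{distinct} twists $\zeta_N^{-j}$ rather than a single modulus, and the summation runs over the ordered simplex $i_1<\dots<i_m$ rather than over a symmetric domain. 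The ordering constraint is exactly what blocks the direct use of the generating identity $\prod_{r=1}^{N-1}(1+t\,u_r)=\frac1N\frac{(1+t)^N-1}{t}$ that trivializes the case $s_1=\dots=s_m$; one has to argue that after the cotangent expansion and the evaluation of the power sums the ordered sum nevertheless collapses to a genuine polynomial in $n$ of degree $m$.

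A useful auxiliary identity, which should help both with the degree bound and with the leading coefficient, is the factorization of the \emph{unordered} sum: because the $m$ columns occupy distinct residue classes modulo $m+1$, summing each column freely factorizes, and evaluating the single-column sums as logarithmic derivatives of $x^n-\zeta_{m+1}^{-j}$ at $x=1$ gives
$$
\sum_{(i_1,\dots,i_m)\in\{1,\dots,n\}^m}\prod_{j=1}^m u_{(m+1)i_j-j}=\prod_{j=1}^m\frac{n}{1-\zeta_{m+1}^{-j}}=\frac{n^m}{m+1}.
$$
Comparing this closed form with the ordered sum --- where the tuples with repeated indices contribute only lower-order terms in $n$ --- is a promising route to the leading coefficient $\tfrac{1}{(m+1)!}$, and would finish the interpolation without the heavier $n=m+1$ evaluation. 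Establishing rigorously that the ordered sum is polynomial of degree \emph{exactly} $m$, rather than merely bounding its size, is the crux on which the whole argument rests.
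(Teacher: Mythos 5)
Your overall scheme is the same as the paper's: take for granted that $\mathcal T_n(\zeta_{(m+1)n};m)$ is a polynomial in $n$ of degree $m$ vanishing at $n=1,\dots,m-1$, evaluate at $n=m$ to get $\tfrac{1}{m+1}$ (your computation here is correct and identical to the paper's), and then supply one further datum to pin down the two remaining coefficients of $(n-1)\cdots(n-m+1)(an+b)$. Where you diverge is in that further datum: the paper evaluates at $n=m+1$ and shows the value is $1$ by expanding the Cauchy-type determinant of $\bigl(1/(1-\zeta_{m+1}^i\zeta_{(m+1)^2}^{-j})\bigr)_{1\le i,j\le m+1}$, whereas you propose to extract the leading coefficient from the factorized unordered sum $\sum_{(i_1,\dots,i_m)\in\{1,\dots,n\}^m}\prod_{j}u_{(m+1)i_j-j}=n^m/(m+1)$. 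That identity itself is correct and rather elegant.

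There are, however, two genuine gaps. First, you explicitly leave the polynomiality and the degree bound unproven, calling it ``the crux on which the whole argument rests''; without it the interpolation step is vacuous, so what you have is an outline rather than a proof. (The paper is admittedly also terse here, invoking ``the argument in Lemma~\ref{lem:realsym12111}'', but it does carry both interpolation data points to completion.) Second, your route to the leading coefficient does not follow from the unordered-sum identity as stated: because the $j$-th column carries the twist $\zeta_N^{-j}$, the $m!$ ordered sectors $i_{\sigma(1)}<\dots<i_{\sigma(m)}$ are \emph{distinct} sums, so you cannot simply divide by $m!$ without first proving that all sectors share the same leading coefficient; and the tuples with repeated indices are not obviously of lower order, since an individual factor $u_r$ can have magnitude of order $n$ (when $r$ is near $0$ or $N$), so counting the diagonal tuples does not by itself bound their contribution. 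You would need to repair both points, or else fall back on the paper's concrete $n=m+1$ evaluation, which in turn requires the determinant computation you have not attempted.
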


\noindent 
{\it Remark.}  
It follows from Theorem \ref{th:1-2-m} that the generating function is given by 
$$
\sum_{m=0}^\infty\mathcal T_n(\zeta_{(m+1)n};m)X^m=\frac{(X+1)^{n+1}-1}{(n+1)X}\,.
$$



\begin{proof}[Proof of Theorem \ref{th:1-2-m}.]
%
As follows from the definition, we have $\mathcal T_n(\zeta_{(m+1)n};m)=0$ when $n=1,2,\dots,m-1$. Similarly to the argument in Lemma \ref{lem:realsym12111}, we can write as  
$$
\mathcal T_n(\zeta_{(m+1)n};m)=(n-1)(n-2)\cdots(n-m+1)(a n+b)\,,
$$ 
where $a$ and $b$ are independent from $n$.  
When $n=m$, the right-hand side of (\ref{fqmzv-1-2-m}) is given by 
\begin{align*}
\frac{1}{(1-\zeta_{(m+1)m}^{(m+1)-1})(1-\zeta_{(m+1)m}^{2(m+1)-2})\cdots(1-\zeta_{(m+1)m}^{m(m+1)-m})}
&=\frac{1}{(1-\zeta_{m+1})(1-\zeta_{m+1}^{2})\cdots(1-\zeta_{m+1}^{m})}\\
&=\frac{1}{m+1}\,. 
\end{align*}
As the left-hand side of (\ref{fqmzv-1-2-m}) is  $(a m+b)(m-1)!$, we obtain 
\begin{equation}
(a m+b)(m-1)!=\frac{1}{m+1}\,. 
\label{eq:201}
\end{equation} 
When $n=m+1$, the right-hand side of (\ref{fqmzv-1-2-m}) is equal to $1$.  
This is done by considering the matrix 
$$
A=\left(\frac{1}{1-\zeta_{m+1}^i\zeta_{(m+1)^2}^{-j}}\right)_{1\le i,j\le m+1}\,.
$$ 
We can show that 
\begin{align*}
{\rm det}(A)&:=\frac{\prod_{1\le i<j\le m+1}(\zeta_{m+1}^i-\zeta_{m+1}^j)(\zeta_{(m+1)^2}^{-i}-\zeta_{(m+1)^2}^{-j})}{\prod{i=1}^{m+1}\prod{j=1}^{m+1}(1-\zeta_{m+1}^i\zeta_{(m+1)^2}^{-j})}\\
&=\sum_{\sigma\in S_{m+1}}{\rm sgn}(\sigma)\prod_{i=1}^{m+1}\frac{1}{1-\zeta_{m+1}^i\zeta_{(m+1)^2}^{-\sigma(i)}}\\
&=1\,. 
\end{align*} 
Here, group permutations by the set $S = \{\sigma(1), \dots, \sigma(m+1)\} \cap \{1,\dots,m\}$. 
For a fixed $m$-element subset $I = \{i_1 < \dots < i_m\}$ of $\{1,\dots,m+1\}$, 
consider permutations with $\sigma(\{1,\dots,m+1\}) \supset \{1,\dots,m\}$ such that 
$\{\sigma^{-1}(1), \dots, \sigma^{-1}(m)\} = I$ as sets. 

making the term vanish. The only surviving permutations are those with $\sigma(\{1,\dots,m\}) = \{1,\dots,m\}$ 



Since the only surviving term in the permutation expansion is the identity permutation's product, and this equals $\mathcal{T}_{m+1}(\zeta_{(m+1)^2};m)$ times the product over the remaining index (which equals $1$), we conclude $\mathcal{T}_{m+1}(\zeta_{(m+1)^2};m)=1$.  

Since the left-hand side of (\ref{fqmzv-1-2-m}) is  $\bigl(a(m+1)+b\bigr)m!$,   
we have  
\begin{equation}
\bigl(a(m+1)+b\bigr)m!=1\,. 
\label{eq:202}
\end{equation}
Thus, by (\ref{eq:201}) and (\ref{eq:202}), we get $a=1/(m+1)!$ and $b=0$. 
Hence, we obtain 
\begin{align*}
\mathcal T_n(\zeta_{(m+1)n};m)&=(n-1)(n-2)\cdots(n-m+1)\frac{n}{(m+1)!}\\
&=\frac{1}{m+1}\binom{n}{m}\,.
\end{align*}
\end{proof}

\section{Comments and future works}   

In this paper, we obtained explicit formulas for the real part of the (finite) multiple zeta values with the root of unity, where only one power is $2$, and all the other powers are $1$. 
Previously, we were limited to the case where the powers were the same, but in this paper, for the first time, we have been able to obtain explicit formulas for the case where the powers are different. The proof may be somewhat lengthy, but we have left it here intentionally because it will provide a basis for obtaining explicit formulas in subsequent papers in many other cases by applying the methods developed in this paper. For example, the case where only one power is a positive integer $A(\ge 3)$ and the remaining powers are all $1$.


\section*{Acknowledgement}  

This work was partly done when T.K. visited Universit\'e de Bordeaux in October 2025. He appreciates the support by Projet ANR JINVARIANT. 
T.K. was partly supported by JSPS KAKENHI Grant Number 24K22835.

\end{document}